\documentclass[12pt]{article}

\usepackage{amssymb}%
\usepackage{amsmath}%
\usepackage{amsthm}%
\usepackage{graphicx}%
\usepackage{hyperref}%
\usepackage{xcolor}%
\usepackage{mathrsfs}%

\allowdisplaybreaks%

{\theoremstyle{plain}%
 \newtheorem{theorem}{Theorem}

}
{\theoremstyle{remark}

}
{\theoremstyle{definition}
\newtheorem{definition}{Definition}
\newtheorem{example}{Example}
}

\begin{document}

\begin{center}
{\large Abelian-normal decimal expansions}

 \ 

{\textsc{John M. Campbell}} 

 \ 

{\small Department of Mathematics and Statistics}\\
{\small Dalhousie University}\\ 
{\small Halifax, NS B3H 4R2}\\ 
{\small Canada} \\
{\small \href{mailto: jh241966@dal.ca}{\tt jh241966@dal.ca}}\\

\end{center}

\begin{abstract}
 A landmark paper on normal numbers due to Champernowne [\emph{J.\ London Math.\ Soc.}, 1933] provided the first explicit 
 construction of a normal number. Since many research works have concerned normality-preserving selection rules and operations on 
 the sequence of digits of a given normal number, this leads us to introduce the concept of an \emph{abelian-normal number}, in such 
 a way so that subwords that are equivalent up to permutations are counted as being equivalent, and so that the number of occurrences 
 of equivalent subwords appearing, up to a given point in a decimal expansion, is then normalized by the size of the associated 
 equivalence class. This may be thought of as giving rise to an interdisciplinary area linking combinatorics on words and probabilistic 
 number theory. Since every normal number is abelian-normal, this raises the question as to whether or not there exists an 
 abelian-normal number that is not normal. We succeed in constructing such a constant in an explicit way. 
\end{abstract}

\vspace{0.1in}

\noindent {\footnotesize \emph{MSC:} 05A05, 11K16, 68R15}

\vspace{0.1in}

\noindent {\footnotesize \emph{Keywords:} normal number, permutation, 
 subword, Champernowne constant, digit, decimal, complexity function, abelian 
 complexity function}

\section{Introduction}
 For a fixed base $\mathcal{B} \geq 2$, for a positive value $\alpha < 1$, and for a nonempty block $E$ of base-$\mathcal{B}$ digits, we write 
 $A_{E}(\alpha, n) = A_{E, \mathcal{B}}(\alpha, n)$ in place of the number of occurrences of $E$ in the first $n$ base-$\mathcal{B}$ digits
 of $\alpha$ (past the decimal point), adopting notation from Sz\"usz and Volkmann \cite{SzuszVolkmann1994}, and adopting 
 the convention whereby words are counted with 
 overlaps allowed (and adopting the standard convention whereby a base-$\mathcal{B}$ expansion is such that it is 
 not eventually equal to an infinite sequence of digits of the form $\mathcal{B} - 1$). 
 Let $\ell(E)$ denote the \emph{length} of $E$, i.e., the number of digits occurring in $E$, with multiplicities counted. 
 The value $\alpha$ is said to be \emph{normal} in base $\mathcal{B}$ if 
\begin{equation}\label{definitionnormal}
 \lim_{n \to \infty} \frac{A_{E}(\alpha, n)}{n} = \frac{1}{\mathcal{B}^{\ell(E)}} 
\end{equation}
 for all nonempty blocks $E$, and similarly for real numbers $\alpha \geq 1$. 
 While the concept of a normal number was introduced in 1909 by Borel~\cite{Borel1909}, the given definition of a 
 normal number is reminiscent of the relatively recent discipline known as combinatorics on words, which often concerns the enumeration 
 of blocks occurring in a given sequence or word. This leads us to consider new and interdisciplinary subjects given 
 by how recent developments in combinatorics on words could be used to extend the definition of a normal number. 

 Let $\mathbb{N}$ denote the set of positive integers and let $\mathbb{N}_{0} = \mathbb{N} \cup \{ 0 \}$. For a given sequence 
 $\text{{\bf x}}$ with a finite codomain, i.e., a function on $\mathbb{N}$ with a finite codomain, we let this be identified with an infinite 
 word. The \emph{factor complexity function} $\rho_{\text{{\bf x}}}\colon \mathbb{N}_{0} \to \mathbb{N}$ on $\text{{\bf x}}$ maps $n \in 
 \mathbb{N}_{0}$ to the number of length-$n$ factors appearing in $\text{{\bf x}}$. Similarly, the \emph{abelian complexity function} 
 $\rho_{\text{{\bf x}}}^{\operatorname{ab}}\colon \mathbb{N}_{0} \to \mathbb{N}$ maps $n \in \mathbb{N}_{0}$ to the number, up to 
 equivalence by permutations of words of equal length, of length-$n$ factors occurring in $\text{{\bf x}}$. The significance of abelian 
 complexity functions in the field of combinatorics on words leads us to consider how similarly defined functions could be applied to 
 extend the definition of a normal number. This leads us to introduce the concept of an \emph{abelian-normal number}. 

 Let the equivalence relation $\sim$ denote equivalence 
 of finite words up to permutations. For a finite word $w$, 
 we then let $[w]_{\sim}$ denote the equivalence
 class consisting of all permutations of $w$. 

\begin{definition}\label{definitionBcount}
 Define
 $$ B_{E}(\alpha, n) := \sum_{F \in [E]_{\sim}} A_{F}(\alpha, n). $$
 \end{definition}

\begin{example}
 The first explicit construction of a normal number is due to Champernowne in 1933 \cite{Champernowne1933} and is given by the
 real number
\begin{equation}\label{displayC10}
 C_{10} = 0.12345678910111213141516171819202122232425262728293\ldots 
\end{equation}
 given by concatenating, after the decimal point, the base-10 expansions of consecutive positive integers. 
 Among the 50 digits past the decimal point displayed in \eqref{displayC10}, 
 we find that 
 $A_{12}(C_{10}, 50) = 3$, whereas 
 $B_{12}(C_{10}, 50) = 5$, as there are $2$ occurrences of the permutation $21$ of the block $12$. 
\end{example}

\begin{definition}\label{abnormal}
 For a base $\mathcal{B} \geq 2$, 
 a positive value $\alpha$ is said to be \emph{abelian-normal} in base $\mathcal{B}$ if 
\begin{equation}\label{displayabnormal}
 \lim_{n \to \infty} \frac{1}{ \left| [E]_{\sim} \right| } \frac{B_{E}(\alpha, n)}{n} = \frac{1}{\mathcal{B}^{\ell(E)}} 
\end{equation}
 for all  nonempty   blocks $E$. 
\end{definition}

 It is immediate that every normal number is abelian-normal. Indeed, we see that 
\begin{align*}
 \lim_{n \to \infty} \frac{B_{E}(\alpha, n)}{n} 
 & = \sum_{F \in [E]_{\sim}} \lim_{n \to \infty} \frac{A_{F}(\alpha, n)}{n} \\ 
 & = \frac{ \left| [E]_{\sim} \right| }{\mathcal{B}^{\ell(E)}}. 
\end{align*}
 This raises the question 
 as to whether or not there is a number that is abelian-normal but not normal. 
 We succeed, based on our extensive interactions with GPT-5.6 Pro, 
 in explicitly constructing a 
 number that is abelian-normal but not normal. 

\section{An abelian-normal, non-normal number}\label{sectionabnormal}
 Let
\begin{equation}\label{C100}
 C_{100}=0.c_1c_2\ldots
\end{equation}
 denote the base-$100$ Champernowne constant, with
$c_i\in\{0,1,\ldots,99\}$ for all positive integers $i$; that is,
$C_{100}$ is formed by concatenating the base-$100$ digit expansions of
consecutive positive integers. 
 It is known that the base-$b$ Champernowne constant $C_b$, which is 
 formed by concatenating the base-$b$ expansions of consecutive positive integers after the decimal point, 
 is normal in base $b \geq 2$, 
 and, one way of proving this would be
 as an immediate consequence of a construction due to 
 Sz\"usz and Volkmann \cite{SzuszVolkmann1994}. 
 We thus have that $C_{100}$ is normal in base $100$. 
 
For each base-$100$ digit $u$, write $u=10a+b$, where    $a,b\in\{0,1,\ldots,9\}$. We then define the length-$2$ decimal   word  
 $\phi(u)$     by
\begin{equation}\label{definephi}
 \phi(10a+b)=
 \begin{cases}
 01, & (a,b)=(1,0),\\
 12, & (a,b)=(2,1),\\
 20, & (a,b)=(0,2),\\
 ab, & \text{otherwise.}
 \end{cases}
\end{equation}
 We identify a length-2 word with the ordered pair consisting of
 the first and second characters, in order, and vice-versa. 
 We also write
$\phi(u)=\phi_1(u)\phi_2(u)$, where $\phi_1(u)$ and $\phi_2(u)$ are the
first and second decimal digits of $\phi(u)$, respectively.

Being consistent with the notation in \eqref{C100}, we define
\begin{equation}\label{defineXi10}
 \Xi_{10}=0.\phi(c_1)\phi(c_2)\phi(c_3)\cdots.
\end{equation}
The symbols $0,1,\ldots,99$ occurring in $c_1c_2\cdots$ are treated as
distinct base-$100$ labels before $\phi$ is applied. Thus,
$$
 \Xi_{10}
 =0.01{\big|}20{\big|}03{\big|}04{\big|}05{\big|}06{\big|}
 07{\big|}08{\big|}09{\big|}01{\big|}11{\big|}12{\big|}\cdots.
$$

\begin{theorem}\label{theoremXi10}
 The constant $\Xi_{10}$ is abelian-normal in base $10$ but not normal
 in base $10$.
\end{theorem}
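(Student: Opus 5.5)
The plan is to exploit two facts about the map $\phi$ in \eqref{definephi}. First, $\phi$ only ever swaps the two decimal digits of certain base-$100$ digits ($10\mapsto 01$, $21\mapsto 12$, $02\mapsto 20$), so $\phi(u)$ is always a permutation of the word $ab$. Second, $\phi$ is not injective: the images $01,12,20$ are each hit twice and $10,21,02$ are never hit. The non-injectivity destroys normality at even (aligned) positions. The swap-only property preserves abelian classes.

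The transfer mechanism I will use throughout is the following. Since $C_{100}$ is normal in base $100$, for every $m$ each word $u_1\cdots u_m$ of base-$100$ digits occurs among $c_1c_2\cdots$ with limiting frequency $100^{-m}$. I split the starting positions $i$ of a length-$k$ decimal window of $\Xi_{10}$ by parity. If $i=2j-1$ is odd (aligned), the window is determined by $c_j\cdots c_{j+m-1}$ with $m=\lceil k/2\rceil$. If $i=2j$ is even, the window is determined by $c_j\cdots c_{j+m-1}$ with $m=\lceil (k+1)/2\rceil$. In either case the window is a fixed function of an $m$-tuple of consecutive base-$100$ digits. Because aligned occurrences of $m$-tuples in $C_{100}$ are exactly occurrences of length-$m$ blocks, the asymptotic frequency (among starting positions of a fixed parity) of any property of the window equals the probability of that property when $u_1,\dots,u_m$ are i.i.d. uniform on $\{0,\dots,99\}$. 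Passing from $n$ decimal digits to roughly $n/2$ base-$100$ digits costs only $O(1)$ boundary terms, so
$$\frac{A_E(\Xi_{10},n)}{n}\to \tfrac12\bigl(p_{\mathrm{odd}}(E)+p_{\mathrm{even}}(E)\bigr).$$
Making this bookkeeping precise is the only step I expect to need real care, but it is routine.

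\emph{Non-normality.} I take $E=02$. At odd (aligned) starts the window is $\phi(u)$, which never equals $02$, so $p_{\mathrm{odd}}=0$. At even starts the window is $\phi_2(u_1)\phi_1(u_2)$ with $u_1,u_2$ independent. The marginal of $\phi_1(u)$ is uniform on $\{0,\dots,9\}$: the three swaps move one of the ten values with first digit $1$ to first digit $0$, one with first digit $2$ to first digit $1$, and one with first digit $0$ to first digit $2$, so every count stays $10$. By the same argument, the marginal of $\phi_2(u)$ is uniform. Hence $p_{\mathrm{even}}=1/100$, and
$$\frac{A_{02}(\Xi_{10},n)}{n}\to \frac{1}{200}\neq\frac1{100},$$
so $\Xi_{10}$ is not normal.

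\emph{Abelian-normality.} Fix $E$ of length $k$, and let $\mathcal M(E)$ denote its multiset of digits, so that $B_E$ counts windows whose multiset equals $\mathcal M(E)$. A window is composed of three kinds of pieces:
\begin{itemize}
\item full blocks $\phi(u_r)$ for interior indices $r$;
\item possibly a leading single digit $\phi_2(u_1)$, at even starts;
\item possibly a trailing single digit $\phi_1(u_m)$.
\end{itemize}
These pieces come from distinct independent $u$'s. For a full block, the multiset of $\phi(u)$ equals that of $ab$, which has the same distribution as the multiset of two i.i.d. uniform decimal digits. For a single-digit piece, the digit is uniform by the marginal computation above. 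Therefore the window's multiset has exactly the same distribution as the multiset of $k$ i.i.d. uniform decimal digits, in both parity classes. The probability that this multiset equals $\mathcal M(E)$ is $|[E]_{\sim}|/10^{k}$. Averaging over the two parities gives
$$\frac{B_E(\Xi_{10},n)}{n}\to \frac{|[E]_{\sim}|}{10^{k}},$$
which is \eqref{displayabnormal}. This completes the plan for Theorem~\ref{theoremXi10}.
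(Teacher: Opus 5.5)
Your proposal is correct and follows essentially the same route as the paper. The paper also splits windows by the parity of their starting position and transfers base-$100$ normality to each parity class. Your claim that a window's digit multiset is distributed like that of $k$ i.i.d.\ uniform decimal digits is exactly the paper's generating-polynomial identities $P_1=P_2=S$ and $Q=S^2$, followed by extracting the relevant coefficient of $S^m$. The only difference is your non-normality witness: you use $02$, which never occurs at aligned positions and has limiting frequency $\frac{1}{200}$, whereas the paper uses $01$, with limiting frequency $\frac{3}{200}$; both computations are valid.
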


\begin{proof}
   Let $U$ be uniformly distributed over the labels in   $\{0,1,\ldots,99\}$, and write $XY=\phi(U)$,     with the understanding that $X$ and  
  $Y$, respectively, 
 are equal to the first and second digits (written as single-character words) of $\phi(U)$, 
 and where $XY$ denotes the concatenation of these words. 
 It follows in an immediate way from
\eqref{definephi} that
\begin{equation}\label{PrXYab}
 \operatorname{Pr}(XY=ab)=
 \begin{cases}
 \dfrac{2}{100}, & ab\in\{01,12,20\},\\
 0, & ab\in\{10,21,02\},\\
 \dfrac{1}{100}, & \text{otherwise.}
 \end{cases}
\end{equation}
 While the distribution in \eqref{PrXYab} 
 is not uniform, each of the associated 
 coordinate distributions is uniform, with 
\begin{equation}\label{twouniform}
 \operatorname{Pr}(X=d)=\operatorname{Pr}(Y=d)=\frac{1}{10} 
\end{equation}
 for $ d \in \{ 0, 1, \ldots, 9 \}$. 
 Indeed, in the first coordinate, the three exceptional substitutions 
 (indicated in the definition of $\phi$ in \eqref{definephi}) 
 replace $1$ by $0$, $2$ by $1$, and $0$ by $2$, whereas, in the second
coordinate, these substitutions replace $0$ by $1$, $1$ by $2$, and $2$ by $0$.
Consequently, the number of occurrences of each digit in either
coordinate is unchanged.

For commuting indeterminates $z_0,z_1,\ldots,z_9$, write
$\mathbf z=(z_0,z_1,\ldots,z_9)$ and set
\begin{equation*}
 S(\mathbf z)=\frac{z_0+z_1+\cdots+z_9}{10}.
\end{equation*}
We also set
\begin{equation*}
 P_j(\mathbf z)
 =\frac{1}{100}\sum_{u=0}^{99}z_{\phi_j(u)} 
\end{equation*}
 for $j \in \{ 1, 2 \}$
 and 
 $$ Q(\mathbf z)
 =\frac{1}{100}\sum_{u=0}^{99}
 z_{\phi_1(u)}z_{\phi_2(u)}.
 $$
 From \eqref{twouniform}, we find that 
\begin{equation}\label{singlecoordinateidentities}
 P_1(\mathbf z)=P_2(\mathbf z)=S(\mathbf z).
\end{equation}
Moreover, each of the three exceptional substitutions in
\eqref{definephi} reverses a pair of digits. Since the given 
indeterminates commute, the corresponding monomial is unchanged.
 As a consequence, we have that 
\begin{equation*}
 Q(\mathbf z)
 =\frac{1}{100}\sum_{a,b=0}^{9}z_az_b
 = S^2(\mathbf z).
\end{equation*}

 We next record a consequence of the base-$100$ normality of
$c_1c_2\cdots$ that is required for our purposes. For every fixed positive
integer $s$ and every function
$F\colon\{0,1,\ldots,99\}^{s}\longrightarrow\mathbb C$, we claim that 
\begin{equation}\label{normalaveraging}
 \lim_{N\to\infty}\frac{1}{N}
 \sum_{i=1}^{N}F(c_i,c_{i+1},\ldots,c_{i+s-1}) =\frac{1}{100^{s}}
 \sum_{u_1,\ldots,u_s=0}^{99}F(u_1,\ldots,u_s). 
\end{equation} 
 This may be seen by grouping the terms on the left-hand side 
 of \eqref{normalaveraging}
 according to each possible
length-$s$ block, as every such block is of limiting frequency $100^{-s}$. 
 
 Now, write $$  x_1 x_2 x_3 \cdots=\phi(c_1)\phi(c_2)\phi(c_3)\cdots, $$ so that
\begin{equation}\label{xcoordinates}
 x_{2i-1}=\phi_1(c_i)
 \quad\text{and}\quad
 x_{2i}=\phi_2(c_i) 
\end{equation} 
 for each positive integer $i$. 
For a finite decimal word $w=w_1w_2\cdots w_m$, let
\begin{equation}\label{defineM}
 M(w;\mathbf z)
 =\prod_{j=1}^{m}z_{w_j}
 =\prod_{d=0}^{9}z_d^{\nu_d(w)},
\end{equation}
where $\nu_d(w)$ is the number of occurrences of the digit $d$ in $w$.
All polynomial limits below are understood coefficientwise.

 If $m=2r$ (for $r \geq 1$), then \eqref{normalaveraging} gives 
\begin{equation}\label{oddstartevenlength}
 \lim_{N\to\infty}\frac{1}{N}
 \sum_{i=1}^{N}
 M(x_{2i-1}\cdots x_{2i+2r-2};\mathbf z) 
 = Q^r(\mathbf z) 
 = S^{2r}(\mathbf z).
\end{equation}
 If $m=2r+1$ (for $r \geq 0$), then \eqref{normalaveraging} gives 
\begin{equation}%keepitlabeled
 \lim_{N\to\infty}\frac{1}{N}
 \sum_{i=1}^{N}
 M(x_{2i-1}\cdots x_{2i+2r-1};\mathbf z) 
 =Q^r(\mathbf z) P_1(\mathbf z)
 = S^{2r+1}(\mathbf z).
\end{equation}
 If $m=2r$ (for $r \geq 1$), then \eqref{normalaveraging} gives 
\begin{equation}%keepitlabeled 
 \lim_{N\to\infty}\frac{1}{N}
 \sum_{i=1}^{N}
 M(x_{2i}\cdots x_{2i+2r-1};\mathbf z) 
 = P_2(\mathbf z) Q^{r-1}(\mathbf z) P_1(\mathbf z)
 = S^{2r}(\mathbf z). 
\end{equation} 
 If $m=2r+1$ (for $r \geq 0$), then \eqref{normalaveraging} gives 
\begin{equation}\label{evenstartoddlength}
 \lim_{N\to\infty}\frac{1}{N}
 \sum_{i=1}^{N}
 M(x_{2i}\cdots x_{2i+2r};\mathbf z) 
 = P_2(\mathbf z) Q^r(\mathbf z) 
 = S^{2r+1}(\mathbf z).
\end{equation} 
 So, regardless of whether a length-$m$ block begins at an odd position or at an even position, the limit of each 
 displayed average among \eqref{oddstartevenlength}--\eqref{evenstartoddlength} 
 is $S^m(\mathbf z)$.  Moreover, among 
 the $n-m+1$ possible starting positions of length-$m$ blocks, the numbers of odd and 
 even starting positions are 
\begin{equation}\label{dividebyn} 
 \left\lceil\frac{n-m+1}{2}\right\rceil
 \quad\text{and}\quad
 \left\lfloor\frac{n-m+1}{2}\right\rfloor, 
\end{equation}
 respectively, and, furthermore, since $m$ is fixed, each of these quantities in \eqref{dividebyn}, after division by $n$,
tends to $\frac{1}{2}$. It follows from this and from 
 \eqref{oddstartevenlength}--\eqref{evenstartoddlength} that 
\begin{equation}\label{allstartpositions}
 \lim_{n\to\infty}\frac{1}{n}
 \sum_{j=1}^{n-m+1}M(x_jx_{j+1}\cdots x_{j+m-1};\mathbf z)
 = S^m(\mathbf z) 
\end{equation}
 for every fixed $m\geq1$. 

 Now, let $E$ be a (nonempty) decimal block of length $m$.   By the definition of $B_E$, the coefficient of     $z_0^{\nu_0(E)}  
  z_1^{\nu_1(E)}\cdots z_9^{\nu_9(E)}$ in the polynomial   $$  \frac{1}{n}
 \sum_{j=1}^{n-m+1}M(x_jx_{j+1}\cdots x_{j+m-1};\mathbf z)
$$ is $\frac{B_E(\Xi_{10},n)}{n}$. On the other hand,
\begin{equation}\label{takecoefficient}
 \left[z_0^{\nu_0(E)}z_1^{\nu_1(E)}\cdots z_9^{\nu_9(E)}\right]
 S(\mathbf z)^m
 =\frac{m!}{\nu_0(E)!\nu_1(E)!\cdots\nu_9(E)!}\frac{1}{10^m}.
\end{equation}
Since
\begin{equation}\label{equivalenceclasssize}
 \left|[E]_{\sim}\right|
 =\frac{m!}{\nu_0(E)!\nu_1(E)!\cdots\nu_9(E)!},
\end{equation}
taking the  coefficient indicated   in \eqref{allstartpositions} gives us that  
$$
 \lim_{n\to\infty}
 \frac{1}{\left|[E]_{\sim}\right|}
 \frac{B_E(\Xi_{10},n)}{n}
 =\frac{1}{10^m}
 =\frac{1}{10^{\ell(E)}}.
$$
   We thus have that   $\Xi_{10}$ is abelian-normal in base $10$.  

 So, it remains to show that $\Xi_{10}$ is not normal. In this direction, we begin by considering the ordered decimal block $01$. At starting 
 positions of the form $2i-1$, the block of length $2$ is exactly $\phi(c_i)$. The word $01$ has two
preimages under $\phi$, namely the base-$100$ symbols $1$ and $10$.
Since every base-$100$ symbol has limiting frequency $1/100$, we have
\begin{equation}\label{aligned01frequency}
 \lim_{N\to\infty}\frac{1}{N}
 \#\{1\leq i\leq N:x_{2i-1}x_{2i}=01\}
 =\frac{2}{100}.
\end{equation}
 At starting positions of the form $2i$, the relevant block is $\phi_2(c_i)\phi_1(c_{i+1})$. By \eqref{normalaveraging} with $s=2$, and by 
 \eqref{singlecoordinateidentities}, its limiting frequency is
\begin{align}
\begin{split}
 &\frac{1}{100^2}
 \#\{(u,v)\in\{0,1,\ldots,99\}^2:
 \phi_2(u)=0,\ \phi_1(v)=1\} \\
 &\qquad=
 \frac{1}{100^2}
 \#\{u:\phi_2(u)=0\}
 \#\{v:\phi_1(v)=1\}
 =\frac{10\cdot10}{100^2}
 =\frac{1}{100}.
\end{split}\label{crossing01frequency}
\end{align}
  Finally, starting positions of the forms $2i-1$ and $2i$ each have  asymptotic density $\frac{1}{2}$.  Combining
 \eqref{aligned01frequency} and \eqref{crossing01frequency}, we obtain
$$
 \lim_{n\to\infty}\frac{A_{01}(\Xi_{10},n)}{n}
 =\frac{1}{2}\frac{2}{100}
 +\frac{1}{2}\frac{1}{100}
 =\frac{3}{200}, 
$$
 in contrast to the value $\frac{1}{100}$   required for the frequency for the
block $01$ in a normal base-$10$ expansion. We thus have that $\Xi_{10}$ is not
normal in base $10$.
\end{proof}

\section{Conclusion} 
 A major open problem concerning normal numbers is given by whether or not ``naturally occurring'' constants (as opposed to 
 ``artificially constructed'' constants given by concatenating digit expansions) such as $\pi$ and $e$ are normal. This leads us to 
 conclude by encouraging the pursuit of research efforts based on the determination as to whether or not constants such as $\pi$ and 
 $e$ are abelian-normal. 

 \subsection*{Acknowledgements}
 The author thanks Jean-Paul Allouche and Ver\'{o}nica Becher for useful feedback concerning this research paper. 
 The author acknowledges extensive interactions with GPT-5.6 Pro during the exploratory and proof-development stages of this work. All 
 AI-generated suggestions were substantially revised, corrected, and independently verified by the author, who assumes full responsibility 
 for the mathematical content.

\end{document}